
\documentclass[a4paper,11pt]{amsart}
\usepackage{a4wide}
\usepackage[utf8]{inputenc}
\usepackage{amssymb}
\usepackage{url}
\usepackage{hyperref}

\usepackage{tikz}
\usepackage{color}

\newcommand{\field}[1]{\mathbb{{#1}}}

\newcommand{\eps}{\varepsilon}

\newcommand{\N}{\field{{N}}}
\newcommand{\Z}{\field{{Z}}}
\newcommand{\Q}{\field{{Q}}}

\newcommand{\intpartup}[1]{\left\lceil#1\right\rceil}
\newcommand{\fracpart}[1]{\left\{#1\right\}}
\newcommand{\dd}{\,\mathrm{d}}

\DeclareMathOperator{\odd}{odd}
\DeclareMathOperator{\even}{even}
\DeclareMathOperator{\Ree}{Re}

\newtheorem*{theorem*}{Theorem}
\newtheorem{theorem}{Theorem}
\newtheorem{corollary}{Corollary}

\newtheorem{lemma}{Lemma}
\newtheorem*{lemma*}{Lemma}

\theoremstyle{remark}
\newtheorem*{acknowledgments}{Acknowledgments}

\allowdisplaybreaks[4]


\newenvironment{List}{\begin{list}{$\bullet$}{
\setlength{\labelwidth}{.5cm}
\setlength{\leftmargin}{.7cm}
}
}{\end{list}}     

\newcounter{fixedfig}
\newenvironment{fixedfig}{
\refstepcounter{fixedfig}
\centerline\bgroup%
\def\caption##1{\begin{minipage}{10.5cm}\textsc{Figure \arabic{fixedfig}}: ##1\end{minipage}}%
\begin{tabular}{c}%
}
{%
\end{tabular}%
\egroup%
}

\makeatletter
\renewcommand*\pmod[1]{%
  \allowbreak
  \mathchoice
    {\mkern  9mu}
    {\mkern  8mu}%
    {\mkern  6mu}
    {\mkern  6mu}%
  ({\operator@font mod}\mkern4mu #1)%
}
\makeatother

\author[F.~Battistoni]{Francesco Battistoni}
\address[F.~Battistoni]{Dipartimento di Matematica per le Scienze Economiche, Finanziarie ed Attuariali\\
         Universit\`{a} Cattolica\\
         via Necchi 9\\
         20123 Milano\\
         Italy}
\email{francesco.battistoni@unicatt.it}

\author[L.~Greni\'{e}]{Lo\"{\i}c Greni\'{e}}
\address[L.~Greni\'{e}]{Dipartimento di Ingegneria gestionale, dell'informazione e della produzione\\
         Universit\`{a} di Bergamo\\
         viale Marconi 5\\
         24044 Dal\-mi\-ne\\
         Italy}
\email{loic.grenie@gmail.com}

\author[G.~Molteni]{Giuseppe Molteni}
\address[G.~Molteni]{Dipartimento di Matematica\\
         Universit\`{a} di Milano\\
         via Saldini 50\\
         20133 Milano\\
         Italy}
\email{giuseppe.molteni1@unimi.it}

\keywords{Periods length of continued fractions, quadratic surds}

\subjclass[2020]{11A55, 11R11, 11Y65}

\title[First and second moment for the length]
      {The first and second moment for the length of the period of the continued fraction expansion for $\sqrt{d}$}

\begin{document}

\begin{abstract}
Let $d$ be any positive and non square integer. We prove an upper bound for the first two moments of the
length $T(d)$ of the period of the continued fraction expansion for $\sqrt{d}$. This allows to improve
the existing results for the large deviations of $T(d)$.
\end{abstract}

\maketitle

\begin{center}
Mathematika {\bf 70} (4), pp. 12 (2024).\\
Electronically published on July 23, 2024.\\
DOI: \url{https://doi.org/10.1112/mtk.12273}
\end{center}

\section{Introduction and results}
\label{sec:1} Let $d\in\N$. Let $T(d)$ be the length of the minimal positive period for the simple
continued fraction expansion for $\sqrt{d}$ when $d$ is not a square, otherwise let $T(d)=0$.
Sierpi\'{n}ski~\cite[p.~293]{Sierpinski3} (see also~\cite[p.~315]{Sierpinski}) noticed that Lagrange's
argument proving the periodicity of this representation actually proves that $T(d) \leq 2d$.
%
%
%
%
%
%
Hickerson~\cite{Hickerson} noticed that this argument shows that $T(d)\leq g(d)$, where
\[
g(d) := \#\big\{(m,q)\in\N^2\colon m<\sqrt{d},\ |q-\sqrt{d}|<m,q\mid (d-m^2)\big\},
\]
and used this bound to prove that $T(d)\leq g(d) \leq d^{1/2 + \log2/\log\log d + O(\log\log\log
d/(\log\log d)^2))}$. He also proved that $g(d) = \Omega(\sqrt{d})$. In the same years both
Hirst~\cite{Hirst} (for squarefree $d$, only) and Podsypanin~\cite{Podsypanin2,Podsypanin} (for any
$d$) independently proved that $T(d) \ll \sqrt{d}\log d$; Hirst as a consequence of a different formula
computing $g(d)$
%
%
%
%
allowing a stronger bound for $g(d)$, and Podsypanin via a connection between $T(d)$ and the residue at
$1$ of the Dedekind zeta function associated with the field $\Q[\sqrt{d}]$. The same connection was
independently noticed also by Stanton, Sudler and Willams~\cite{StantonSudlerWilliams} who proved that
$T(d) \leq c \sqrt{d}\log d$ for a smaller value of the constant $c$, later further improved by
Cohn~\cite{Cohn4}. This connection also shows that in case the Generalized Riemann Hypothesis holds for
quadratic fields, then the upper bound improves to $T(d)\ll \sqrt{d}\log\log d$
(see~\cite{Podsypanin2,Podsypanin}), in agreement with the heuristic and numerical investigations
in~\cite{Williams2}. Cohn~\cite{Cohn4} also proved that $T(d) = \Omega (\sqrt{d}/\log\log d)$. \\
At first it was believed that $T(d)\ll \sqrt{d}$, but nowadays the general sentiment has changed, in
spite of the fact that all proven results are still compatible with such a strong upper bound. For sure
$T(d)$ strongly oscillates, passing from values as small as $1$ (for $d=m^2+1$, any $m$) and $2$ (for
$d=m^2+2m/a$, any $m$ and any $a|2m$, $a\neq 2m$) to as large as $\sqrt{d}/\log\log d$ infinitely often
by Cohn's theorem. This suggests to try to bound in some way the number of $d$ where $T(d)$ is
exceptionally large, in some sense. Let
\[
D(x,\alpha) := \{d\in (x,2x]\colon T(d)>\alpha \sqrt{d}\}.
\]
Rockett and Sz\"{u}sz~\cite{RockettSzusz2} used the connection with the residue of the Dedekind zeta
function to prove that
\begin{equation}\label{eq:1A}
\# D(x,\alpha) \leq \frac{c+o(1)}{\log^2\alpha}\, x
\qquad
\text{as $x\to\infty$}
\end{equation}
for a suitable but undetermined constant $c$.

In this paper we extract some more information from the bound $T(d)\leq g(d)$, via an explicit
computation of the first two moments for $g$. In fact, we prove the following facts.
\begin{theorem}\label{th:1}
Let $x>1$, then
\begin{align}
&\sum_{d\leq x} g(d) = c_1x^{3/2} - 2x - 2\sqrt{x}  + \theta (x+4\sqrt{x}),   \label{eq:2A}
\intertext{where $c_1 := \frac{4}{3}\log 2 = 0.9241\ldots$ and $\theta = \theta(x)\in [0,1]$, and}
&\sum_{d\leq x} g(d)^2 \leq 11.9\,x^2 + 5\,x^{3/2}\log^2(4e^4x).              \label{eq:3A}
\end{align}
\end{theorem}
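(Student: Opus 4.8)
The whole argument is driven by one linearising substitution. If the pair $(m,q)$ is counted by $g(d)$, set $k:=(d-m^2)/q\in\N$; then $d=m^2+kq$, and expanding the squares shows that $|q-\sqrt d|<m$ is equivalent to $|k-q|<2m$. Conversely every triple $(m,k,q)\in\N^3$ with $|k-q|<2m$ gives, via $d:=m^2+kq$, a pair counted by $g(d)$. Hence
\[
\sum_{d\leq x}g(d)=\#\{(m,k,q)\in\N^3\colon |k-q|<2m,\ m^2+kq\leq x\},
\]
and, similarly, $\sum_{d\leq x}g(d)^2$ equals the number of sextuples $(m_1,k_1,q_1,m_2,k_2,q_2)\in\N^6$ with $m_1^2+k_1q_1=m_2^2+k_2q_2\leq x$ and $|k_i-q_i|<2m_i$ for $i=1,2$.

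For \eqref{eq:2A} I would fix $m$ and write $k=q+j$ with $|j|<2m$. The map $q\leftrightarrow k$ pairs the value $+j$ with $-j$, so the count of admissible $q$ only depends on $|j|$, and for fixed $m$ the inner sum collapses to $\intpart{\sqrt{x-m^2}}+2\sum_{j\geq1}\intpart{\tfrac{-j+\sqrt{j^2+4(x-m^2)}}{2}}$, the range being $1\leq j\leq\min(2m-1,\,x-m^2-1)$. Dropping the floors and replacing the $j$-sum by $\tfrac12\int_0^{2m}\sqrt{t^2+4(x-m^2)}\dd t=m\sqrt x+(x-m^2)\log\frac{m+\sqrt x}{\sqrt{x-m^2}}$ (using $m^2+(x-m^2)=x$), the leading part of the $m$-sum becomes $\int_0^{\sqrt x}\bigl[2t\sqrt x+2(x-t^2)\log\tfrac{t+\sqrt x}{\sqrt{x-t^2}}-2t^2\bigr]\dd t$. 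The three pieces evaluate to $x^{3/2}$, to $x^{3/2}\int_0^1(1-u^2)\log\tfrac{1+u}{1-u}\dd u$ (substitute $t=\sqrt x\,u$), and to $-\tfrac23x^{3/2}$; the middle integral equals $\tfrac43\log2-\tfrac13$ by the partial-fraction identity $\tfrac{u-u^3/3-2/3}{1-u^2}=\tfrac13\bigl(u-\tfrac2{u+1}\bigr)$, so the three add up to exactly $c_1x^{3/2}$. The remaining, entirely elementary but finicky, task is to keep track, with explicit signs, of everything discarded — the floor remainders, the Euler–Maclaurin corrections, the term $\int_0^{\sqrt x}\sqrt{x-t^2}\dd t=\tfrac\pi4x$ coming from $\intpart{\sqrt{x-m^2}}$ and from the linear term in $j$, and the range of $m$ near $\sqrt x$ where the $j$-cutoff is $x-m^2-1$ rather than $2m-1$ — and to show they combine into $-2x-2\sqrt x$ plus a single remainder in $[0,1]\cdot(x+4\sqrt x)$. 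This last bookkeeping, not the main term, is where I expect the difficulty of \eqref{eq:2A} to sit.

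For \eqref{eq:3A} I would write $g(d)^2=g(d)+\#\{(m_1,q_1)\neq(m_2,q_2)\text{ both valid for }d\}$; summed over $d\leq x$ the first term is $O(x^{3/2})$ by \eqref{eq:2A}, so the issue is the off-diagonal count $\mathcal O$. Swapping the summations, for fixed $(m_1,q_1,m_2,q_2)$ the admissible $d\leq x$ form a residue class modulo $L:=\operatorname{lcm}(q_1,q_2)$ (the two divisibilities being simultaneously solvable precisely when $m_1^2\equiv m_2^2\pmod{\gcd(q_1,q_2)}$) intersected with an interval $I$ determined by $\sqrt d\in J:=(q_1-m_1,q_1+m_1)\cap(q_2-m_2,q_2+m_2)$; since $|J|\leq 2\min(m_1,m_2)$ and $\sqrt d<q_1+m_1$, one has $|I|\leq 2(q_1+m_1)\,|J|$, whence the number of such $d$ is at most $1+|I|/L$. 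Split $\mathcal O=\mathcal O_{=}+\mathcal O_{\neq}$ according as $m_1=m_2$ or not. The part $\mathcal O_{=}$ is lower order: there the congruence is automatic, and using the parametrisation $q_i=\delta a_i$ with $\gcd(a_1,a_2)=1$ in the identity $\sum_{n\leq N}\rho(n,m)^2=\#\{q_1k_1=q_2k_2\leq N\colon|k_i-q_i|<2m\}$ (where $\rho(n,m)$ is the number of divisors $q$ of $n$ with $|q-n/q|<2m$), one gets $\mathcal O_{=}\ll x^{3/2}\log^2x$, absorbed into the claimed remainder.

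So $\mathcal O_{\neq}$ carries the $x^2$. The "$+1$" contributes the number of admissible quadruples, which the constraints $|q_1-q_2|<m_1+m_2$ and $q_i<\sqrt x+m_i$ bound by $O(x^2)$ with an explicit constant. The "$|I|/L$" term, after writing $q_i=\delta a_i$ with $\delta=\gcd(q_1,q_2)$ and $\gcd(a_1,a_2)=1$, is $\sum 4\delta\sqrt{\tfrac{m_1m_2}{q_1q_2}}=\sum\tfrac{4\sqrt{m_1m_2}}{\sqrt{a_1a_2}}$ over quadruples with $\delta\mid(m_1^2-m_2^2)$ and $\delta|a_1-a_2|<m_1+m_2$ (and the usual size bounds on $a_i,\delta,m_i$). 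The decisive — and, I believe, genuinely hard — point is that when one sums over $\delta$ only its divisibility into $m_1^2-m_2^2$ survives, so the $\delta$-sum is a convergent-type sum of size $\ll\sigma_{-1}(m_1^2-m_2^2)$ rather than producing a logarithm, while the short-interval condition $\delta|a_1-a_2|<m_1+m_2$ confines the $(a_1,a_2)$-sum; feeding this back and summing over $m_1,m_2<\sqrt x$ against the weight $\sqrt{m_1m_2}(m_1+m_2)\leq\tfrac12(m_1+m_2)^2$ gives $O(x^2)$. Tracking every constant through these reductions — the admissible-quadruple count, the averages of $\sigma_{-1}(m_1^2-m_2^2)$ and of $\prod_{p\mid s}(1+1/p)$, and the interval lengths — so that the total stays below $11.9$ and the errors fit inside $5x^{3/2}\log^2(4e^4x)$ is the real work of \eqref{eq:3A}.
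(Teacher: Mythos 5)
Your opening reduction is exactly the paper's: $\sum_{d\le x}g(d)$ counts triples $(m,q,k)\in\N^3$ with $m^2+kq\le x$ and $|k-q|<2m$, and the square counts the corresponding sextuples. Your main term for \eqref{eq:2A} is correct (the identity $\int_0^1(1-u^2)\log\frac{1+u}{1-u}\dd u=\frac43\log 2-\frac13$ checks out, and the three pieces do sum to $c_1x^{3/2}$), but you reach it by replacing the $j$-sum and the $m$-sum by integrals, whereas the paper keeps exact sums throughout: it cancels $\sum_{q<\sqrt x-m}q$ against $\sum_{2m<q<\sqrt x+m}(q-2m)$, swaps the order of $m$ and $q$, and applies a third-order Euler--Maclaurin estimate only to the single sum $\sum_{y<q\le 2y}1/q$ (Lemma~\ref{lem:1A}). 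That structural choice is what makes the two-sided error $-2x-2\sqrt x+\theta(x+4\sqrt x)$ come out with explicit signs. In your version each of the discarded quantities you list (floor remainders over up to $2m$ values of $j$ for each $m$, sum-versus-integral defects in $j$ and in $m$, the $\frac{\pi}{4}x$ term, the boundary range of $m$) is individually of size $\asymp x$, and you explicitly defer showing that they combine into the stated window. Since \eqref{eq:2A} is an exact formula whose entire content beyond the main term is that window, this is a genuine gap, not routine bookkeeping.

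For \eqref{eq:3A} your decomposition is the transpose of the paper's: fixing $(m_1,q_1,m_2,q_2)$ and counting admissible $d$ in a residue class mod $\mathrm{lcm}(q_1,q_2)$ meeting an interval is the same count as the paper's count of pairs $(k_1,k_2)$, and your ``$+1$'' and ``$|I|/L$'' parts are its $V$ and $U$. Three things go wrong. First, the bound $|I|/L\le 4\delta\sqrt{m_1m_2/(q_1q_2)}$ does not follow from $|I|\le 2(q_1+m_1)|J|$ and $|J|\le 2\min(m_1,m_2)$: it would require $q_1+m_1\le\sqrt{q_1q_2}$, which fails whenever $q_2<q_1$ (and $q_2$ can be as small as $1$ while $q_1+m_1>\sqrt d$). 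The correct bound is of the shape $8\sqrt x\,\sqrt{m_1m_2}\,\delta/(q_1q_2)=8\sqrt x\,\sqrt{m_1m_2}/(\delta a_1a_2)$, which changes the $(a_1,a_2)$- and $\delta$-sums and all the constants downstream. Second, $\mathcal O_{=}\ll x^{3/2}\log^2x$ is asserted, not proved; the trivial bound via $\rho(n,m)\le d(n)$ gives $x^{3/2}\log^3x$, so you need the localization of divisors near $\sqrt n$, with an explicit constant, to fit inside $5x^{3/2}\log^2(4e^4x)$. Third, and decisively, the constants $11.9$ and $5$ are the point of the statement and are nowhere derived: the paper obtains them from the closed forms of the three integrals $A,B,C$, from the exact count $c(D)D$ of solutions of $m_1^2\equiv m_2^2\pmod D$ (Lemma~\ref{lem:2A}) fed into $\sum_Dc(D)D^{-2}=\frac{13}{14}\zeta(2)^2/\zeta(3)$ and $\sum_{D\le 2\sqrt x}c(D)/D\le 0.228\log^2(4e^4x)$, and from a parity-refined bound on $V$. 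Your plan of averaging $\sigma_{-1}(m_1^2-m_2^2)$ over $m_1,m_2$ is a workable substitute for that Lemma, but as written the argument establishes at most $\sum_{d\le x}g(d)^2\ll x^2$, not the stated inequality.
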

\noindent %
A modification of the argument proving~\eqref{eq:3A} allows to strengthen its conclusion and prove that
$\sum_{d\leq x} g(d)^2$ $\leq 8 x^2 + O(x^{3/2}\log^4 x)$ as $x$ diverges: we do not provide the details
for this improvement, but we describe the basic steps at the end of Section~\ref{sec:3}.\\
From the inequality $T(d)\leq g(d)$ we deduce that:
\begin{corollary}\label{cor:1A}
Let $x>1$, then
\begin{align*}
\sum_{d\leq x} T(d) \leq c_1 x^{3/2}
\quad\text{and}\quad
\sum_{d\leq x} T(d)^2 \leq 11.9\,x^2 + 5\,x^{3/2}\log^2(4e^4x)
\end{align*}
where $c_1 := \frac{4}{3}\log 2 = 0.9241\ldots$.
%
Moreover,
\begin{align*}
\sum_{x< d\leq 2x} T(d) \leq c_2 x^{3/2} + O(\sqrt{x})
\quad\text{and}\quad
\sum_{x< d\leq 2x} T(d)^2 \leq 47\,x^2 + O(x^{3/2}\log^2x)
\end{align*}
as $x\to\infty$, where $c_2 := \frac{8\sqrt{2}-4}{3}\log 2 = 1.6898\ldots$.
\end{corollary}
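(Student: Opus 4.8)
The plan is to derive all four assertions from Theorem~\ref{th:1} together with the pointwise inequality $0\le T(d)\le g(d)$ recorded in the introduction, which gives $\sum_{d\le x}T(d)\le\sum_{d\le x}g(d)$ and $\sum_{d\le x}T(d)^2\le\sum_{d\le x}g(d)^2$ termwise. The second moment bound on $[1,x]$ is then literally \eqref{eq:3A}, and nothing more is needed there.

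For the first moment on $[1,x]$ I would insert \eqref{eq:2A} and use $\theta\le1$: this yields $\sum_{d\le x}T(d)\le c_1x^{3/2}-2x-2\sqrt x+(x+4\sqrt x)=c_1x^{3/2}-(x-2\sqrt x)$, which is $\le c_1x^{3/2}$ as soon as $x\ge4$, since then $2\sqrt x\le x$. For $1<x<4$ only the terms $d=2$ and $d=3$ can occur, so using $T(2)=1$ and $T(3)=2$ one checks $\sum_{d\le x}T(d)\le 3<c_1\cdot 3^{3/2}$ by hand; this settles the first inequality for every $x>1$.

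For the first moment on $(x,2x]$ I would telescope, writing $\sum_{x<d\le2x}g(d)=\bigl(\sum_{d\le2x}g(d)\bigr)-\bigl(\sum_{d\le x}g(d)\bigr)$ and applying \eqref{eq:2A} at $2x$ and at $x$ with parameters $\theta_2,\theta_1\in[0,1]$ respectively. The two $3/2$-power terms combine into $c_1(2\sqrt2-1)x^{3/2}=c_2x^{3/2}$, exactly the constant in the statement. In the remaining terms the coefficient of $x$ equals $-4+2\theta_2+2-\theta_1=-2+2\theta_2-\theta_1$, which is $\le0$ since $\theta_2\le1$ and $\theta_1\ge0$; discarding that term and collecting the surviving $\sqrt x$-contributions leaves an error $\le(2+2\sqrt2)\sqrt x=O(\sqrt x)$. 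Hence $\sum_{x<d\le2x}T(d)\le\sum_{x<d\le2x}g(d)\le c_2x^{3/2}+O(\sqrt x)$.

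The one step that needs an extra idea is the second moment on $(x,2x]$, because the crude estimate $\sum_{x<d\le2x}g(d)^2\le\sum_{d\le2x}g(d)^2\le11.9\,(2x)^2+5\,(2x)^{3/2}\log^2(8e^4x)=47.6\,x^2+O(x^{3/2}\log^2x)$ just misses the claimed $47\,x^2$. To recover it I would keep the subtracted sum and bound it from below by Cauchy--Schwarz: since $g$ is supported on at most $x$ non-square integers up to $x$, one has $\sum_{d\le x}g(d)^2\ge\bigl(\sum_{d\le x}g(d)\bigr)^2/x$, and by \eqref{eq:2A} with $\theta\ge0$ this is $\ge(c_1x^{3/2}-2x-2\sqrt x)^2/x=c_1^2x^2+O(x^{3/2})$ for $x$ large. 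As $c_1^2=\tfrac{16}{9}\log^22=0.854\ldots$, subtracting gives $\sum_{x<d\le2x}g(d)^2\le(47.6-c_1^2)\,x^2+O(x^{3/2}\log^2x)<47\,x^2+O(x^{3/2}\log^2x)$, and the same bound for $\sum_{x<d\le2x}T(d)^2$ follows. Beyond this, I expect only routine bookkeeping; the single thing to be careful about is not discarding the $c_1^2x^2$ saving in the last line, which is precisely what makes the constant come out as $47$ rather than $47.6$.
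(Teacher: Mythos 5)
Your proposal is correct and follows essentially the same route as the paper: the first three bounds are read off from Theorem~\ref{th:1} via $T(d)\le g(d)$, and the dyadic second moment is obtained exactly as in the paper's proof, by telescoping and subtracting the Cauchy--Schwarz lower bound $\sum_{d\le x}g(d)^2\ge c_1^2x^2+O(x^{3/2})$ from the crude $47.6\,x^2$ estimate. Your explicit small-$x$ check for the first inequality and the bookkeeping of the $\theta$-terms only spell out what the paper calls ``immediate.''
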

\begin{proof}
Only the last claim is not immediate. It follows noticing that $\sum_{x< d\leq 2x} g(d)^2 = \sum_{d\leq
2x} g(d)^2 - \sum_{d\leq x} g(d)^2$, that $\sum_{x< d\leq 2x} g(d)^2 \leq 47.6\,x^2 + O(x^{3/2}\log^2x)$
by~\eqref{eq:3A}, and that $\sum_{d\leq x} g(d)^2\geq c_1^2\,x^2 + O(x^{3/2})$ by the Cauchy--Schwarz
inequality and~\eqref{eq:2A}.
%
\end{proof}
\noindent %
The bounds in Corollary~\ref{cor:1A} allow to improve~\eqref{eq:1A} in the following way.
\begin{corollary}\label{cor:2A}
Let $\alpha >0$ and $x>1$. Then
\[
\# D(x,\alpha) \leq \frac{c_2 + o(1)}{\alpha}\, x
\quad\text{and}\quad
\# D(x,\alpha) \leq \frac{47 + o(1)}{\alpha^2}\, x,
\]
where $c_2 := \frac{8\sqrt{2}-4}{3}\log 2 = 1.6898\ldots$.
\end{corollary}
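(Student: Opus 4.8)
The plan is to obtain both bounds directly from the moment estimates over the dyadic block $(x,2x]$ furnished by Corollary~\ref{cor:1A}, via a Markov-type inequality. The point is simply that every $d\in D(x,\alpha)$ lies in $(x,2x]$ and satisfies $T(d)>\alpha\sqrt d>\alpha\sqrt x$, so a large value of $\#D(x,\alpha)$ forces $\sum_{x<d\le2x}T(d)$, respectively $\sum_{x<d\le2x}T(d)^2$, to be large.

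Concretely, for the first inequality I would write
\[
\alpha\sqrt x\,\#D(x,\alpha)\;\le\;\sum_{d\in D(x,\alpha)}\alpha\sqrt d\;\le\;\sum_{d\in D(x,\alpha)}T(d)\;\le\;\sum_{x<d\le2x}T(d)\;\le\;c_2x^{3/2}+O(\sqrt x),
\]
where the third inequality uses $T(d)\ge0$ to enlarge the range of summation from $D(x,\alpha)\subseteq(x,2x]$ to all of $(x,2x]$, and the last step is Corollary~\ref{cor:1A}. Dividing through by $\alpha\sqrt x$ gives $\#D(x,\alpha)\le c_2 x/\alpha+O(1/\alpha)=\frac{c_2+o(1)}{\alpha}\,x$ as $x\to\infty$. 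For the second inequality I would repeat the same steps with squares: $T(d)^2>\alpha^2 d>\alpha^2 x$ for $d\in D(x,\alpha)$, hence
\[
\alpha^2 x\,\#D(x,\alpha)\;\le\;\sum_{d\in D(x,\alpha)}T(d)^2\;\le\;\sum_{x<d\le2x}T(d)^2\;\le\;47\,x^2+O(x^{3/2}\log^2x),
\]
and dividing by $\alpha^2 x$ yields $\#D(x,\alpha)\le\frac{47+o(1)}{\alpha^2}\,x$.

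There is essentially no genuine obstacle here: the whole content has been packed into Corollary~\ref{cor:1A}, and what remains is the elementary averaging above. The only points deserving a word of care are that the $o(1)$ is to be read as $x\to\infty$ (the residual terms $O(1/\alpha)$ and $O(x^{-1/2}\log^2x/\alpha^2)$ are negligible for fixed $\alpha$, and in fact uniformly in $\alpha$ provided the implied constants in Corollary~\ref{cor:1A} are absolute), and that one genuinely needs the nonnegativity of $T$ to pass from the sum over $D(x,\alpha)$ to the sum over the full interval. A marginally better constant could be squeezed out by not discarding the difference between $\sqrt d$ and $\sqrt x$ — that is, by estimating $\sum_{d\in D(x,\alpha)}\sqrt d$ and $\sum_{d\in D(x,\alpha)}d$ directly — but since the constants $c_2$ and $47$ displayed in the statement already coincide with those of Corollary~\ref{cor:1A}, the crude version suffices.
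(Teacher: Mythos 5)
Your proof is correct and is exactly the argument the paper intends: the corollary is stated as an immediate consequence of the dyadic-block bounds in Corollary~\ref{cor:1A}, obtained by the Markov-type inequality $T(d)>\alpha\sqrt{d}>\alpha\sqrt{x}$ for $d\in D(x,\alpha)\subseteq(x,2x]$, with the error terms absorbed into the $o(1)$. The paper gives no further details, so there is nothing to compare beyond noting that your write-up matches its (implicit) proof.
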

The claims are not trivial for $\alpha > c_2$ and $\alpha > \sqrt{47}$, respectively; the first claim
supersedes the second one when $\alpha \in (\sqrt{47},47/c_2)$.
\medskip
\medskip\\
We have some comments about these results:
\begin{List}
\item %
    For the second moment we have not determined the asymptotic behaviour, but the order we have found is
    the correct one since, as we have already noticed, the Cauchy--Schwarz inequality and~\eqref{eq:2A}
    show that
    \[
    \sum_{d\leq x} g(d)^2 \geq (c_1^2+o(1))x^2
    \quad\text{as $x\to\infty$.}
    \]
\item %
    In similar way, H\"{o}lder's inequality shows that every moment $M_{g,r}$ satisfies the lower-bound
    \[
    M_{g,r}(x)
    :=\sum_{d\leq x} g(d)^r \geq (c_1^r+o_r(1))x^{r/2+1}
    \quad\text{as $x\to\infty$.}
    \]
    For some time we have cultivated the hope to prove that this is the correct order also for $r\geq 3$.
    In fact, we have succeeded to prove that the analogue of the $U$ term appearing in the proof
    of~\eqref{eq:3A} (see Section~\ref{sec:3}) is $\ll_r x^{r/2+1}$ for every $r$. However, the analogue
    of the $V$ term resisted to every attempt to bound it nontrivially, and numerically, for $r=3$, it
    seems that it grows as $x^{5/2}\log x$, not as $x^{5/2}$. As a consequence, the idea that the order
    of $M_{g,r}(x)$ is $x^{r/2+1}$ is probably incorrect, and a behaviour of type $M_{g,r}(x)\asymp_r
    x^{r/2+1}(\log x)^{r-2}$ appears to be a better conjecture.
\item %
    An upper bound of that type can be proved for the moments of $T$, since
    \begin{align*}
    M_{T,r}(x)
    &:=\sum_{d\leq x} T(d)^r
      =\sum_{d\leq x} T(d)^{r-2} T(d)^2\\
    &\ll_r x^{(r-2)/2}(\log x)^{r-2}\sum_{d\leq x} T(d)^2
     \ll_r x^{r/2+1}(\log x)^{r-2}
    \end{align*}
    for every $r\geq 2$, by the bound for $T(d)$ and Corollary~\ref{cor:1A}. (This argument cannot by
    applied for the moments of $g$ since the bound $\sqrt{d}\log d$ is proved for $g(d)$ only when $d$ is
    squarefree).
    However, the moments for $T$ are probably smaller. In fact, computations support the conjecture that
    the orders of both the mean value $\frac{1}{x}\sum_{d\leq x} T(d)$ and the mean second moment
    $\overline{M}_{T,2}(x):=\frac{1}{x}\sum_{d\leq x} T(d)^2$ are $o(\sqrt{x})$ and $o(x)$, respectively,
    so that probably the bounds in Corollary~\ref{cor:1A} are not the sharpest ones. More precisely,
    their graphs suggest that they behave as $\sqrt{x}/(\log x)^{0.6}$, and as $x/(\log x)^{0.8}$
    respectively: see Figure~\ref{fig1}.

    \begin{fixedfig}
\begingroup
  \inputencoding{cp1252}%
  \makeatletter
  \providecommand\color[2][]{%
    \GenericError{(gnuplot) \space\space\space\@spaces}{%
      Package color not loaded in conjunction with
      terminal option `colourtext'%
    }{See the gnuplot documentation for explanation.%
    }{Either use 'blacktext' in gnuplot or load the package
      color.sty in LaTeX.}%
    \renewcommand\color[2][]{}%
  }%
  \providecommand\includegraphics[2][]{%
    \GenericError{(gnuplot) \space\space\space\@spaces}{%
      Package graphicx or graphics not loaded%
    }{See the gnuplot documentation for explanation.%
    }{The gnuplot epslatex terminal needs graphicx.sty or graphics.sty.}%
    \renewcommand\includegraphics[2][]{}%
  }%
  \providecommand\rotatebox[2]{#2}%
  \@ifundefined{ifGPcolor}{%
    \newif\ifGPcolor
    \GPcolorfalse
  }{}%
  \@ifundefined{ifGPblacktext}{%
    \newif\ifGPblacktext
    \GPblacktexttrue
  }{}%
  \let\gplgaddtomacro\g@addto@macro
  \gdef\gplbacktext{}%
  \gdef\gplfronttext{}%
  \makeatother
  \ifGPblacktext
    \def\colorrgb#1{}%
    \def\colorgray#1{}%
  \else
    \ifGPcolor
      \def\colorrgb#1{\color[rgb]{#1}}%
      \def\colorgray#1{\color[gray]{#1}}%
      \expandafter\def\csname LTw\endcsname{\color{white}}%
      \expandafter\def\csname LTb\endcsname{\color{black}}%
      \expandafter\def\csname LTa\endcsname{\color{black}}%
      \expandafter\def\csname LT0\endcsname{\color[rgb]{1,0,0}}%
      \expandafter\def\csname LT1\endcsname{\color[rgb]{0,1,0}}%
      \expandafter\def\csname LT2\endcsname{\color[rgb]{0,0,1}}%
      \expandafter\def\csname LT3\endcsname{\color[rgb]{1,0,1}}%
      \expandafter\def\csname LT4\endcsname{\color[rgb]{0,1,1}}%
      \expandafter\def\csname LT5\endcsname{\color[rgb]{1,1,0}}%
      \expandafter\def\csname LT6\endcsname{\color[rgb]{0,0,0}}%
      \expandafter\def\csname LT7\endcsname{\color[rgb]{1,0.3,0}}%
      \expandafter\def\csname LT8\endcsname{\color[rgb]{0.5,0.5,0.5}}%
    \else
      \def\colorrgb#1{\color{black}}%
      \def\colorgray#1{\color[gray]{#1}}%
      \expandafter\def\csname LTw\endcsname{\color{white}}%
      \expandafter\def\csname LTb\endcsname{\color{black}}%
      \expandafter\def\csname LTa\endcsname{\color{black}}%
      \expandafter\def\csname LT0\endcsname{\color{black}}%
      \expandafter\def\csname LT1\endcsname{\color{black}}%
      \expandafter\def\csname LT2\endcsname{\color{black}}%
      \expandafter\def\csname LT3\endcsname{\color{black}}%
      \expandafter\def\csname LT4\endcsname{\color{black}}%
      \expandafter\def\csname LT5\endcsname{\color{black}}%
      \expandafter\def\csname LT6\endcsname{\color{black}}%
      \expandafter\def\csname LT7\endcsname{\color{black}}%
      \expandafter\def\csname LT8\endcsname{\color{black}}%
    \fi
  \fi
    \setlength{\unitlength}{0.0500bp}%
    \ifx\gptboxheight\undefined%
      \newlength{\gptboxheight}%
      \newlength{\gptboxwidth}%
      \newsavebox{\gptboxtext}%
    \fi%
    \setlength{\fboxrule}{0.5pt}%
    \setlength{\fboxsep}{1pt}%
\begin{picture}(7936.00,3400.00)%
    \gplgaddtomacro\gplbacktext{%
      \csname LTb\endcsname
      \put(726,440){\makebox(0,0)[r]{\strut{}$0.94$}}%
      \put(726,831){\makebox(0,0)[r]{\strut{}$0.96$}}%
      \put(726,1223){\makebox(0,0)[r]{\strut{}$0.98$}}%
      \put(726,1614){\makebox(0,0)[r]{\strut{}$1$}}%
      \put(726,2005){\makebox(0,0)[r]{\strut{}$1.02$}}%
      \put(726,2396){\makebox(0,0)[r]{\strut{}$1.04$}}%
      \put(726,2788){\makebox(0,0)[r]{\strut{}$1.06$}}%
      \put(726,3179){\makebox(0,0)[r]{\strut{}$1.08$}}%
      \put(858,220){\makebox(0,0){\strut{}$0$}}%
      \put(2194,220){\makebox(0,0){\strut{}$200000$}}%
      \put(3530,220){\makebox(0,0){\strut{}$400000$}}%
      \put(4867,220){\makebox(0,0){\strut{}$600000$}}%
      \put(6203,220){\makebox(0,0){\strut{}$800000$}}%
      \put(7539,220){\makebox(0,0){\strut{}$1\times10^{6}$}}%
    }%
    \gplgaddtomacro\gplfronttext{%
    }%
    \gplbacktext
    \put(0,0){\includegraphics{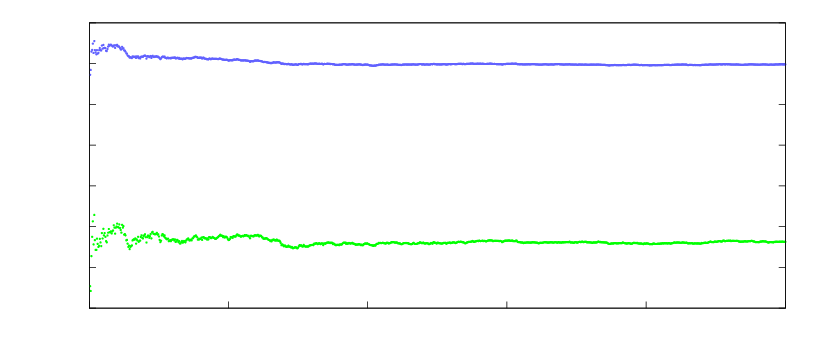}}%
    \gplfronttext
  \end{picture}%
\endgroup
\hspace{1cm}\mbox{}\\
    \caption{Graphs of $\big(\frac{1}{x}\sum_{d\leq x}T(d)\big)/\big(\sqrt{x}/(\log x)^{0.6}\big)$
    (upper) and of $\big(\frac{1}{x}\sum_{d\leq x}T(d)^2\big)/\big(x/(\log x)^{0.8}\big)$ (lower).}%
    \label{fig1}
    \end{fixedfig}
    \medskip

\item %
    The numerical evidence also shows that when restricted to prime values of $d$, the mean value
    $\frac{1}{\pi(x)}\sum_{d\leq x, d\,\text{prime}} T(d)$ and the mean second moment
    $\frac{1}{\pi(x)}\sum_{d\leq x, d\,\text{prime}} T(d)^2$ have the orders $\sqrt{x}$ and $x$,
    respectively: see Figure~\ref{fig2}. If correct, the comparison with what happens for the
    unrestricted means shows that, essentially, the largest values for $T(d)$ come from prime numbers.

    \begin{fixedfig}
\begingroup
  \inputencoding{cp1252}%
  \makeatletter
  \providecommand\color[2][]{%
    \GenericError{(gnuplot) \space\space\space\@spaces}{%
      Package color not loaded in conjunction with
      terminal option `colourtext'%
    }{See the gnuplot documentation for explanation.%
    }{Either use 'blacktext' in gnuplot or load the package
      color.sty in LaTeX.}%
    \renewcommand\color[2][]{}%
  }%
  \providecommand\includegraphics[2][]{%
    \GenericError{(gnuplot) \space\space\space\@spaces}{%
      Package graphicx or graphics not loaded%
    }{See the gnuplot documentation for explanation.%
    }{The gnuplot epslatex terminal needs graphicx.sty or graphics.sty.}%
    \renewcommand\includegraphics[2][]{}%
  }%
  \providecommand\rotatebox[2]{#2}%
  \@ifundefined{ifGPcolor}{%
    \newif\ifGPcolor
    \GPcolorfalse
  }{}%
  \@ifundefined{ifGPblacktext}{%
    \newif\ifGPblacktext
    \GPblacktexttrue
  }{}%
  \let\gplgaddtomacro\g@addto@macro
  \gdef\gplbacktext{}%
  \gdef\gplfronttext{}%
  \makeatother
  \ifGPblacktext
    \def\colorrgb#1{}%
    \def\colorgray#1{}%
  \else
    \ifGPcolor
      \def\colorrgb#1{\color[rgb]{#1}}%
      \def\colorgray#1{\color[gray]{#1}}%
      \expandafter\def\csname LTw\endcsname{\color{white}}%
      \expandafter\def\csname LTb\endcsname{\color{black}}%
      \expandafter\def\csname LTa\endcsname{\color{black}}%
      \expandafter\def\csname LT0\endcsname{\color[rgb]{1,0,0}}%
      \expandafter\def\csname LT1\endcsname{\color[rgb]{0,1,0}}%
      \expandafter\def\csname LT2\endcsname{\color[rgb]{0,0,1}}%
      \expandafter\def\csname LT3\endcsname{\color[rgb]{1,0,1}}%
      \expandafter\def\csname LT4\endcsname{\color[rgb]{0,1,1}}%
      \expandafter\def\csname LT5\endcsname{\color[rgb]{1,1,0}}%
      \expandafter\def\csname LT6\endcsname{\color[rgb]{0,0,0}}%
      \expandafter\def\csname LT7\endcsname{\color[rgb]{1,0.3,0}}%
      \expandafter\def\csname LT8\endcsname{\color[rgb]{0.5,0.5,0.5}}%
    \else
      \def\colorrgb#1{\color{black}}%
      \def\colorgray#1{\color[gray]{#1}}%
      \expandafter\def\csname LTw\endcsname{\color{white}}%
      \expandafter\def\csname LTb\endcsname{\color{black}}%
      \expandafter\def\csname LTa\endcsname{\color{black}}%
      \expandafter\def\csname LT0\endcsname{\color{black}}%
      \expandafter\def\csname LT1\endcsname{\color{black}}%
      \expandafter\def\csname LT2\endcsname{\color{black}}%
      \expandafter\def\csname LT3\endcsname{\color{black}}%
      \expandafter\def\csname LT4\endcsname{\color{black}}%
      \expandafter\def\csname LT5\endcsname{\color{black}}%
      \expandafter\def\csname LT6\endcsname{\color{black}}%
      \expandafter\def\csname LT7\endcsname{\color{black}}%
      \expandafter\def\csname LT8\endcsname{\color{black}}%
    \fi
  \fi
    \setlength{\unitlength}{0.0500bp}%
    \ifx\gptboxheight\undefined%
      \newlength{\gptboxheight}%
      \newlength{\gptboxwidth}%
      \newsavebox{\gptboxtext}%
    \fi%
    \setlength{\fboxrule}{0.5pt}%
    \setlength{\fboxsep}{1pt}%
\begin{picture}(7936.00,3400.00)%
    \gplgaddtomacro\gplbacktext{%
      \csname LTb\endcsname
      \put(726,440){\makebox(0,0)[r]{\strut{}$0.44$}}%
      \put(726,897){\makebox(0,0)[r]{\strut{}$0.46$}}%
      \put(726,1353){\makebox(0,0)[r]{\strut{}$0.48$}}%
      \put(726,1809){\makebox(0,0)[r]{\strut{}$0.5$}}%
      \put(726,2266){\makebox(0,0)[r]{\strut{}$0.52$}}%
      \put(726,2722){\makebox(0,0)[r]{\strut{}$0.54$}}%
      \put(726,3179){\makebox(0,0)[r]{\strut{}$0.56$}}%
      \put(858,220){\makebox(0,0){\strut{}$0$}}%
      \put(2194,220){\makebox(0,0){\strut{}$200000$}}%
      \put(3530,220){\makebox(0,0){\strut{}$400000$}}%
      \put(4867,220){\makebox(0,0){\strut{}$600000$}}%
      \put(6203,220){\makebox(0,0){\strut{}$800000$}}%
      \put(7539,220){\makebox(0,0){\strut{}$1\times10^{6}$}}%
    }%
    \gplgaddtomacro\gplfronttext{%
    }%
    \gplbacktext
    \put(0,0){\includegraphics{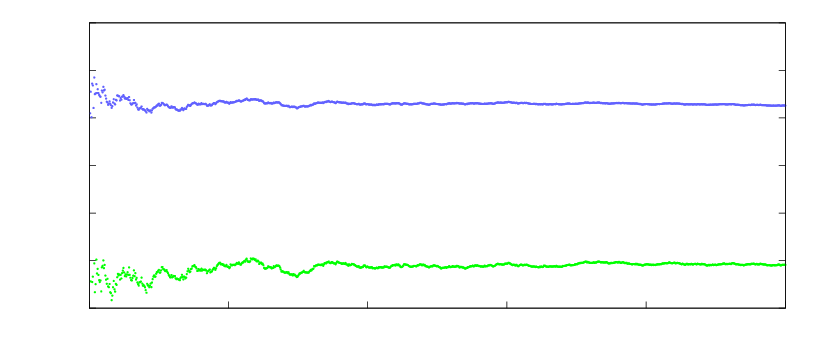}}%
    \gplfronttext
  \end{picture}%
\endgroup
\hspace{1cm}\mbox{}\\
    \caption{Graphs of $\big(\frac{1}{\pi(x)}\sum_{d\leq x,\, d\,\text{prime}}T(d)\big)/\sqrt{x}$ (upper)
    and of $\big(\frac{1}{\pi(x)}\sum_{d\leq x,\, d\,\text{prime}}T(d)^2\big)/x$ (lower).}%
    \label{fig2}
    \bigskip
    \end{fixedfig}
\item %
    Finally, we mention here that very recently M.~A.~Korolev has announced via a preprint in
    arXiv~\cite{Korolev} that $\sum_{d\leq x} g(d)^2 = (c_0 + O_\eps(x^{-1/108+\eps})) x^2$ with $c_0 =
    1.2\ldots$. As a consequence, the asymptotic behaviour for the second moment of $g$ is now known.
\end{List}

\begin{acknowledgments}
The authors wish to warmly thank Maciej Radziejewski for his assistance with the bibliographic research
through the many different editions of Sierpinski's book, and the anonymous referee for her/his careful
reading and suggestions. The first author is member of the INdAM research group GNAMPA, the second and
third authors of the INdAM group GNSAGA.
\end{acknowledgments}

\section{Proof of Equation~\texorpdfstring{\eqref{eq:2A}}{2}}
\label{sec:2}
\subsection{Preliminary results}
\label{subsec:2.1}
By its definition
\[
g(d) = \#\big\{(m,q)\in\N^2\colon m<\sqrt{d},\ |q-\sqrt{d}|<m, \text{ and } \exists k\in\Z \text{ with } d=m^2+kq\big\}.
\]
The strict inequalities in the definition of $g(d)$ require $m$, $q$, $k$ and $d$ to be at least $1$. Let
thus $\N_1 := \{n\in\N\colon n\geq 1\}$. Then
\[
g(d)=\#G(d),
\]
where
\begin{align*}
G(d)
&:= \big\{(m,q,k)\in\N_1^3\colon d=m^2+kq,|q-m|<\sqrt{d}<q+m\big\}
\intertext{that squaring becomes}
&\phantom:= \big\{(m,q,k)\in\N_1^3\colon d=m^2+kq,q-2m<k<q+2m\big\}.
\end{align*}
The sets $G(d)$ are disjoint because of the condition $d=m^2+kq$. Hence
\begin{equation}\label{eq:4A}
\sum_{d\leq x}g(d)=\#\bigcup_{d\leq x}G(d)
= \#\big\{(m,q,k)\in\N_1^3\colon m^2+kq\leq x,q-2m<k<q+2m\big\}.
\end{equation}
For future reference, we notice that if $(m,q,k)$ belongs to this set then $m\leq \sqrt{x-kq}<\sqrt{x}$
and, with $d:=m^2+kq$, from the initial definition of $G(d)$ we have $q<\sqrt{d}+m\leq\sqrt{x}+m$.\\
We need also the following lemma.
\begin{lemma}\label{lem:1A}
Let $y\geq 1$ be an integer. Then there exists $\theta = \theta(y) \in[0,1]$ such that:
\[
\sum_{q=y+1}^{2y}\frac{1}{q} = \log 2 - \frac{1}{4y + 2\theta}.
\]
\end{lemma}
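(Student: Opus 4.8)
The plan is to prove the strict two-sided estimate
\[
\frac{1}{4y+2} < \log 2 - \sum_{q=y+1}^{2y}\frac1q < \frac{1}{4y}
\qquad\text{for every } y\in\N_1,
\]
after which the lemma is immediate: the map $\theta\mapsto \frac1{4y+2\theta}$ is continuous and strictly decreasing on $[0,1]$, equal to $\frac1{4y}$ at $\theta=0$ and to $\frac1{4y+2}$ at $\theta=1$, so by the intermediate value theorem there is a (unique) $\theta=\theta(y)\in(0,1)\subset[0,1]$ with $\log 2 - \sum_{q=y+1}^{2y}1/q = \frac1{4y+2\theta}$.

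To get the two inequalities I would put $S_y:=\sum_{q=y+1}^{2y}\frac1q$ and introduce the auxiliary sequences
\[
a_y := S_y + \frac{1}{4y}, \qquad b_y := S_y + \frac{1}{4y+2},
\]
so that the target inequalities read exactly $b_y < \log 2 < a_y$. From the identity $S_{y+1}-S_y = \frac1{2y+1}-\frac1{2y+2}=\frac{1}{(2y+1)(2y+2)}$ a couple of lines of algebra give
\[
a_{y+1}-a_y = \frac{1}{(2y+1)(2y+2)}-\frac{1}{2y(2y+2)} < 0,
\qquad
b_{y+1}-b_y = \frac{1}{(2y+1)(2y+2)}-\frac{1}{(2y+1)(2y+3)} > 0,
\]
so $(a_y)_{y\ge 1}$ is strictly decreasing and $(b_y)_{y\ge 1}$ is strictly increasing. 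Both sequences converge to $\log 2$, since $S_y=\sum_{q\le 2y}\frac1q-\sum_{q\le y}\frac1q\to\log 2$ (equivalently, $S_y$ is the $2y$-th partial sum of the alternating harmonic series), while the corrections $\frac1{4y}$ and $\frac1{4y+2}$ tend to $0$. A strictly decreasing sequence with limit $\log 2$ stays strictly above $\log 2$, and a strictly increasing one stays strictly below it; hence $b_y<\log 2<a_y$ for all $y\in\N_1$, which is precisely what we wanted.

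I do not foresee any genuine obstacle here: the argument reduces to the two elementary sign computations above together with the standard limit $S_y\to\log 2$. The only point requiring a little care is the bookkeeping in those two differences, and checking that the signs indeed match the asymmetric pair of endpoints $\frac1{4y}$ and $\frac1{4y+2}$ (incidentally, $a_y$ and $b_y$ are the two arithmetic means $\tfrac{S_{n-1}+S_n}{2}$, $\tfrac{S_n+S_{n+1}}{2}$ of consecutive partial sums of the alternating harmonic series at $n=2y$, which is the structural reason they bracket $\log 2$). If one preferred to avoid invoking the limit, the lower bound can also be obtained directly by sandwiching $S_y$ against $\int_{y+1/2}^{2y+1/2}\frac{dt}{t}=\log 2-\log\!\big(1+\frac1{4y+1}\big)$ via the midpoint rule for the convex function $1/t$ together with $\frac{x}{1+x}\le\log(1+x)$, but the monotone-sequence argument is shorter and cleaner.
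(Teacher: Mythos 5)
Your proof is correct, but it takes a genuinely different and more elementary route than the paper's. The paper applies the third-order Euler--Maclaurin formula to $S_y:=\sum_{q=y+1}^{2y}1/q$, obtaining $S_y=\log 2-\frac{1}{4y}+\frac{1}{16y^2}-\int_y^{2y}B_3(x)x^{-4}\dd x$ with $|B_3|\le 1/20$, and then checks (tersely --- the verification is left to the reader) that the two correction terms keep $\log 2-S_y$ inside the required window $\bigl[\frac{1}{4y+2},\frac{1}{4y}\bigr]$. You instead show directly that $a_y:=S_y+\frac{1}{4y}$ is strictly decreasing and $b_y:=S_y+\frac{1}{4y+2}$ is strictly increasing, both with limit $\log 2$, whence $b_y<\log 2<a_y$; your two difference computations are right, since $\frac{1}{4y}-\frac{1}{4(y+1)}=\frac{1}{2y(2y+2)}$ and $\frac{1}{4y+2}-\frac{1}{4y+6}=\frac{1}{(2y+1)(2y+3)}$ give exactly $a_{y+1}-a_y=\frac{1}{(2y+1)(2y+2)}-\frac{1}{2y(2y+2)}<0$ and $b_{y+1}-b_y=\frac{1}{(2y+1)(2y+2)}-\frac{1}{(2y+1)(2y+3)}>0$, and the concluding intermediate-value step is precisely what the statement asks for. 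Your argument avoids Bernoulli polynomials entirely, needs only the standard limit $H_{2y}-H_y\to\log 2$, and even yields the strict conclusion $\theta\in(0,1)$; what it gives up is the flexibility of Euler--Maclaurin, which would produce further terms of the asymptotic expansion were a sharper remainder ever needed. Either proof is perfectly acceptable here.
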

\begin{proof}
Applying the third order Euler--McLaurin formula we get
\begin{align*}
\sum_{q=y+1}^{2y}\frac{1}{q}
&= \sum_{q=y}^{2y}\frac{1}{q} - \frac{1}{y}                  \\
&= \int_{y}^{2y}\frac{\dd q}{q} - \frac{1}{y}
  + \frac{1}{2}\Bigl[\frac{1}{2y} + \frac{1}{y}\Bigr]
  - \frac{1}{12}\Bigl[\frac{1}{(2y)^2} - \frac{1}{y^2}\Bigr]
  - \int_y^{2y} \frac{B_3(x)}{x^4}\dd x                      \\
&= \log 2
  - \frac{1}{4y}
  + \frac{1}{16y^2}
  - \int_y^{2y} \frac{B_3(x)}{x^4}\dd x,
\end{align*}
where, denoting $\fracpart{x}$ the fractional part of $x$, $B_3(x) :=
\fracpart{x}^3-3\fracpart{x}^2/2+\fracpart{x}/2$. The claim follows since $B_3(x)\in[-1/20,1/20]$.
\end{proof}

\subsection{Proof of~\texorpdfstring{\eqref{eq:2A}}{2}}
In all the following computations, each summation index (i.e. $m$, $q$ or $k$) is implicitly at least
equal to $1$.
Furthermore, to ease the notations, we set $y:=\intpartup{\sqrt{x}}-1$, hence $y<\sqrt{x}\leq y+1$, and
the symbol $\theta$ denotes a quantity which is not a constant, i.e. does not necessarily assume the same
value in every instance, but is always in $[0,1]$.

From~\eqref{eq:4A} and the lines below we have
\begin{align*}
\sum_{d\leq x} g(d)&
= \sum_{m<\sqrt{x}}
\Biggl[
    \sum_{q<\sqrt{x}-m} \sum_{k<q+2m}1
    + \sum_{\sqrt{x}-m\leq q<\sqrt{x}+m} \sum_{k\leq (x-m^2)/q}1
    - \sum_{2m<q<\sqrt{x}+m} \sum_{k\leq q-2m}1
\Biggr],                                                                       \\
\intertext{i.e.,}
=& \sum_{m<\sqrt{x}}
\Bigl[
    \sum_{q<\sqrt{x}-m} (q+2m-1)
    + \sum_{\sqrt{x}-m\leq q<\sqrt{x}+m} \Bigl(\frac{x-m^2}{q} -\theta\Bigr)
    - \sum_{ 2m<q<\sqrt{x}+m} (q-2m)
\Bigr],
\intertext{where $\theta$ depends on $x$, $m$ and $q$ but is in $[0,1]$.
The sums $\sum_{q<\sqrt{x}-m} q$ and $\sum_{2m<q<\sqrt{x}+m} (q-2m)$ cancel out. This gives}
=& \sum_{m<\sqrt{x}}
\Bigl[
    \sum_{q<\sqrt{x}-m} (2m-1)
    + \sum_{\sqrt{x}-m\leq q<\sqrt{x}+m} \frac{x-m^2}{q} -2\theta m
\Bigr]                                                                                     \\
=& \sum_{m<\sqrt{x}}
\Bigl[
    (2m-1)(y-m)
    + \sum_{\sqrt{x}-m\leq q<\sqrt{x}+m} \frac{x-m^2}{q}
\Bigr]
- \theta y(y+1).
\end{align*}
We exchange the summation order in the second sum. For this we observe that $m<\sqrt{x}$ means
that $m\leq y$, hence $q<\sqrt{x}+m\leq \sqrt{x}+y$ implies that $q\leq 2y$. Moreover, the assumption
$\sqrt{x}-m\leq q<\sqrt{x}+m$ gives $\sqrt{x}-q \leq m$ when $\sqrt{x}-q$ is positive, and $q-\sqrt{x} <
m$ otherwise. This gives
\begin{align*}
=&
     \sum_{m<\sqrt{x}} { (2m-1)(y-m) }
    + \Big[\sum_{q<\sqrt{x}} \sum_{\sqrt{x}-q\leq m<\sqrt{x}} + \sum_{\sqrt{x}\leq q\leq 2y} \sum_{q-\sqrt{x}<m<\sqrt{x}}\Big] \frac{x-m^2}{q}
    - \theta y(y+1).                                                                        \\
\intertext{In terms of $y$, the ranges become}
%
=&
     \sum_{m\leq y} { (2m-1)(y-m) }
    + \Big[\sum_{q\leq y} \sum_{y-q< m\leq y}
    + \sum_{y< q\leq 2y} \sum_{q-y\leq m\leq y}\Big] \frac{x-m^2}{q}
    - \theta y(y+1).
\intertext{Recalling that $\sum_{k=1}^N k^2 = N(N+1)(2N+1)/6$, after some algebra we get}
=& -\frac{2}{3}y^2+\frac{y}{6}
   + (2y+1)\Bigl(x-\frac{y^2+y}{3}\Bigr)
        \sum_{y< q\leq 2y}\frac{1}{q}
   - \theta y(y+1).
\intertext{By Lemma~\ref{lem:1A} this is}
=& -\frac{2}{3}y^2+\frac{y}{6}
   + (2y+1)\Bigl(x-\frac{y^2+y}{3}\Bigr)
        \Big(\log 2 - \frac{1}{4y+2\widetilde{\theta}}\Big)
   - \theta y(y+1),
\end{align*}
for some $\theta,\widetilde{\theta}\in [0,1]$. This is $\frac{4}{3}(\log 2)x^{3/2} + R$ with $R=O(x)$. With
elementary computations one shows that $-2x-2\sqrt{x} \leq R\leq -x+2\sqrt{x}$.

\section{Proof of Equation~\texorpdfstring{\eqref{eq:3A}}{3}}
\label{sec:3} %
From the computations at the beginning of~\ref{subsec:2.1} we have that $g(d)^2=\#(G(d)\times G(d))$ and
$G(d)$ are disjoint, so that
\begin{align*}
\sum_{d\leq x}g(d)^2
 =&\#\big\{(m_1,q_1,k_1,m_2,q_2,k_2)\in\N_1^6\colon
           m_1^2+k_1q_1=m_2^2+k_2q_2,\,m_1^2+k_1q_1\leq x\\
&\quad \text{ and }q_i-2m_i<k_i<q_i+2m_i\ \forall i\big\}.
\end{align*}
Thus,
\begin{align*}
\sum_{d\leq x} g(d)^2
&=
  \sum_{\substack{m_1,m_2\colon\\m_i<\sqrt{x}}}
  \sum_{\substack{q_1,q_2\colon\\ q_i<\sqrt{x}+m_i}}
  \sum_{\substack{k_1,k_2\colon\\
                  |q_i-k_i|<2m_i \\
                  m_1^2+k_1q_1=m_2^2+k_2q_2 \\
                  m_1^2+k_1q_1\leq x
                 }} 1
\intertext{that we estimate with}
&\leq
  \sum_{\substack{m_1,m_2\colon\\m_i<\sqrt{x}}}
  \sum_{\substack{q_1,q_2\colon\\q_i\leq \sqrt{x}+m_i}}
  \sum_{\substack{k_1,k_2\colon\\
                  0< k_i\leq 2\sqrt{x}-q_i  \\
                  m_1^2+k_1q_1=m_2^2+k_2q_2 \\
                 }} 1
\end{align*}
because $|q_i-k_i|<2m_i$ and $m_i^2+k_iq_i\leq x$ imply that $k_i\leq (x-m_i^2)/q_i \leq 2\sqrt{x}-q_i$:
the resulting bound is a bit loose, but it is independent of $m_i$, and this is very useful for the
computation.\\
We notice that
\[
  \sum_{\substack{k_1,k_2\colon\\
                  0 < k_1 \leq \beta_1\\
                  0 < k_2 \leq \beta_2\\
                  m_1^2+k_1q_1=m_2^2+k_2q_2 \\
                 }} 1
  \leq \Big(1+\min\Big(\frac{D\beta_1}{q_2},\frac{D\beta_2}{q_1}\Big)\Big)\chi_D(m_1^2,m_2^2)
\]
where $D:=\gcd(q_1,q_2)$ and $\chi_D(m_1^2,m_2^2) = 1$ when $m_1^2=m_2^2\pmod{D}$, $0$ otherwise.
\begin{proof}
Let $D:=\gcd(q_1,q_2)$. From the Chinese remainder theorem, there exist $k_1$ and $k_2\in\Z$ such that
$m_1^2+k_1q_1=m_2^2+k_2q_2$ if and only if $D\mid m_1^2-m_2^2$. In that case they are of the form
$k_1=a+\ell\frac{q_2}{D}$, $k_2=b+\ell\frac{q_1}{D}$, where $(a,b)$ is one solution and $\ell\in\Z$. The
assumption that $k_1$ is in $(0,\beta_1]$ implies that $\ell$ is in $(-aD/q_2,(\beta_1-a)D/q_2]$ and
there are at most $1+\beta_1 D/q_2$ integers in this interval.\\
The similar argument for $k_2$ gives the other upper bound for the number of solutions.
\end{proof}
As a consequence we have that
\begin{align*}
\sum_{d\leq x} g(d)^2
\leq&
  \sum_{\substack{m_1,m_2\colon\\m_i<\sqrt{x}}}
  \sum_{\substack{q_1,q_2\colon\\q_i\leq \sqrt{x}+m_i}}
  D\min\Big(\frac{2\sqrt{x}-q_1}{q_2},\frac{2\sqrt{x}-q_2}{q_1}\Big)\chi_D(m_1^2,m_2^2) \\
  &
  +\sum_{\substack{m_1,m_2\colon\\m_i<\sqrt{x}}}
   \sum_{\substack{q_1,q_2\colon\\q_i\leq \sqrt{x}+m_i}}\chi_D(m_1^2,m_2^2)
   =: U + V.
\end{align*}
Firstly we estimate the contribution of the $U$ term. We have
\begin{align}
U
&=
\sum_{\substack{q_1,q_2\colon\\q_i< 2\sqrt{x}}}
  D\min\Big(\frac{2\sqrt{x}-q_1}{q_2},\frac{2\sqrt{x}-q_2}{q_1}\Big)
  \sum_{\substack{m_1,m_2\colon\\\max(1,q_i-\sqrt{x})\leq m_i < \sqrt{x}}}
  \chi_D(m_1^2,m_2^2).                 \notag
\intertext{We split the computation according to the value of $D$. So, letting $q'_1:=q_1/D$,
$q'_2:=q_2/D$, and $L:= 2\sqrt{x}/D$ we get}
&=
\sum_{D\leq 2\sqrt{x}} D \sum_{\substack{q'_1,q'_2\colon\\q'_i< L\\ \gcd(q'_1,q'_2)=1}}
   \min\Big(\frac{L-q'_1}{q'_2},\frac{L-q'_2}{q'_1}\Big)
   \sum_{\substack{m_1,m_2\colon\\\max(1,Dq'_i-\sqrt{x})\leq m_i < \sqrt{x}}}
   \chi_D(m_1^2,m_2^2).                \label{eq:5A}
\end{align}
The next lemma allows to estimate the innermost sum.
\begin{lemma}\label{lem:2A}
Let $D\in\N$, $D\geq 1$, then
\[
\sum_{m_1,m_2\in\Z/D\Z}\chi_D(m_1^2,m_2^2) = c(D)D
\]
where $c$ is the multiplicative map with
\[
c(p^k):=
\begin{cases}
k          & \text{if }p=2,    \\
1+k-k/p    & \text{otherwise.}
\end{cases}
\]
\end{lemma}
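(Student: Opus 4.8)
The plan is to reduce the computation to prime powers by multiplicativity and then to linearise the quadratic condition. Write $N(D)$ for the left-hand side; since $\chi_D(m_1^2,m_2^2)=1$ exactly when $D\mid m_1^2-m_2^2$, we have $N(D)=\#\{(m_1,m_2)\in(\Z/D\Z)^2\colon D\mid m_1^2-m_2^2\}$. First I would check that $N$ is multiplicative: for $\gcd(D_1,D_2)=1$ the Chinese remainder theorem identifies $(\Z/D_1D_2\Z)^2$ with $(\Z/D_1\Z)^2\times(\Z/D_2\Z)^2$ and the condition $D_1D_2\mid m_1^2-m_2^2$ with the pair of congruences modulo $D_1$ and modulo $D_2$, so $N(D_1D_2)=N(D_1)N(D_2)$. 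Since $D\mapsto c(D)D$ is multiplicative as well, it then suffices to prove $N(p^k)=c(p^k)p^k$ for every prime $p$ and $k\ge1$ (the case $D=1$ being $N(1)=1$).

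For odd $p$ I would exploit that $2$ is a unit modulo $p^k$: the map $(m_1,m_2)\mapsto(u,w):=(m_1-m_2,\,m_1+m_2)$ is then a bijection of $(\Z/p^k\Z)^2$ onto itself, with inverse $(u,w)\mapsto(2^{-1}(u+w),\,2^{-1}(w-u))$, and it turns the condition $p^k\mid m_1^2-m_2^2$ into $p^k\mid uw$. Hence $N(p^k)=\#\{(u,w)\in(\Z/p^k\Z)^2\colon p^k\mid uw\}$, which is a pure divisibility count: splitting according to $j:=v_p(u)$, for each $j<k$ there are $(p-1)p^{k-j-1}$ admissible $u$ and $p^{j}$ residues $w$ divisible by $p^{k-j}$, while $j=k$ contributes the single $u=0$ together with all $p^k$ values of $w$. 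Summing gives $N(p^k)=k(p-1)p^{k-1}+p^k=p^k(1+k-k/p)=c(p^k)p^k$, exactly as wanted.

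For $p=2$ the same substitution $(u,w)=(m_1-m_2,m_1+m_2)$ is no longer injective, but it is precisely $2$-to-$1$ onto the set of pairs with $u\equiv w\pmod2$, the fibre over such a $(u,w)$ being $\{(m_1,m_2),\,(m_1+2^{k-1},m_2+2^{k-1})\}$; again $m_1^2-m_2^2=uw$. For $k\ge1$, a pair with $u\equiv w\pmod2$ and $2^k\mid uw$ must have $u,w$ both even, and writing $u=2u'$, $w=2w'$ with $u',w'\in\Z/2^{k-1}\Z$ converts the constraint into $2^{k-2}\mid u'w'$. Thus, for $k\ge2$,
\[
N(2^k)=2\,\#\{(u',w')\in(\Z/2^{k-1}\Z)^2\colon 2^{k-2}\mid u'w'\},
\]
and evaluating the right-hand side by the same $v_2(u')$-splitting as above yields $k\,2^{k-1}$, whence $N(2^k)=k\,2^k=c(2^k)2^k$; the remaining cases $k=1$ and $D=1$ are immediate ($N(2)=2$, $N(1)=1$). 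Combining the prime-power evaluations with multiplicativity gives $N(D)=c(D)D$.

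The only genuine obstacle I anticipate is the prime $2$: there the linearising substitution ceases to be bijective, so one must keep careful track of its exact $2$-to-$1$ behaviour and of the parity condition it produces before the problem collapses back to a divisibility count of the same shape as in the odd case. An equivalent route, for anyone preferring to avoid the fibre bookkeeping, is to split $N(2^k)$ according to the parities of $m_1$ and $m_2$: only equal parities contribute, the all-odd part equals $2^{k-1}e_k$ where $e_k$ is the order of the $2$-torsion of $(\Z/2^k\Z)^\times$ (so $e_1=1$, $e_2=2$, and $e_k=4$ for $k\ge3$), and the all-even part equals $4N(2^{k-2})$, giving the recursion $N(2^k)=2^{k-1}e_k+4N(2^{k-2})$ for $k\ge2$, which is solved by induction from $N(2)=2$ and $N(4)=8$.
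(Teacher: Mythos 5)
Your proof is correct and follows essentially the same route as the paper: reduction to prime powers by the Chinese remainder theorem, linearisation via $(m_1,m_2)\mapsto(m_1-m_2,m_1+m_2)$ turning the condition into a divisibility count $p^k\mid uw$, and a careful treatment of the $2$-to-$1$ behaviour of this map at $p=2$. The only (immaterial) difference is in the final bookkeeping for $p=2$: the paper computes $|Z\cap F|$ by subtracting $|Z\setminus F|$ from the full zero-divisor count, while you rescale to a divisibility count modulo $2^{k-1}$; both give $k2^k$.
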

\begin{proof}
By the Chinese remainder theorem $\chi_D(m_1^2,m_2^2)=1$ if and only if $m_1^2-m_2^2=0\pmod{p^k}$ for
every $p^k\| D$ so that it is sufficient to prove the result when $D=p^k$ is a power of a prime $p$.\\
If $n\in\Z$, we denote $\overline{n}$ the class of $n$ in $\Z/p^k\Z$. Let $A:=(\Z/p^k\Z)^2$ and
$P:=\bigl\{(\overline{m_1},\overline{m_2})\in A\colon \overline{m_1}^2=\overline{m_2}^2\bigr\}$. To prove
the lemma, we need to check that $P$ has $c(p^k)p^k$ elements. Let $f\colon A\longrightarrow A$ be the
map defined by
\[
f(\alpha,\beta):=(\alpha+\beta,\alpha-\beta),
\]
and let $m\colon A\longrightarrow\Z/p^k\Z$ be the multiplication map:
$m(\overline{a},\overline{b})=\overline{a}\overline{b}$. We thus have
\[
(\overline{m_1},\overline{m_2})\in P\iff m(f(\overline{m_1},\overline{m_2}))=\overline{0}.
\]
Let $Z:=m^{-1}(\overline{0})$. Let $(\overline{a},\overline{b})\in Z$. There are $p^k$ such pairs with
$\overline{a}=\overline{0}$. Otherwise, for any $u\in[0,k-1]$, there are $p^{k-u}-p^{k-u-1}$ classes
$\overline{a}$ such that $p^u\|a$; then $p^{k-u}\mid b$, and there are $p^u$ such classes; there are
therefore $p^k-p^{k-1}$ pairs $(\overline{a},\overline{b})\in Z$ with $p^u\|a$. Summing over $u$, and
adding the case with $\overline{a}=\overline{0}$, we get that $Z$ has $p^k+k(p^k-p^{k-1})$ elements.\\
If $p$ is odd, then $f$ is invertible with inverse
\[
g(\overline{a},\overline{b})
=\Bigl(\frac{\overline{a}+\overline{b}}{\overline{2}},\frac{\overline{a}-\overline{b}}{\overline{2}}\Bigr),
\]
therefore $P$ and $Z$ have the same number of elements.\\
If $p=2$, things are slightly more complicated because $2$ is not invertible anymore. To have
$\bigl(\overline{m_1},\overline{m_2}\bigr)\in P$, we need to have $m_1$ and $m_2$ of the same parity. Let
$E$ be the subset of $(\Z/2^k\Z)^2$ made of pairs $(\overline{m_1},\overline{m_2})$ where $m_1$ and $m_2$
have the same parity. Then $F:=f(E)$ is the set of pairs $(\overline{2a},\overline{2b})$ and
\[
f^{-1}\bigl(f(\overline{m_1},\overline{m_2})\bigr)
= \Big\{(\overline{m_1},\overline{m_2}),\big(\overline{m_1+2^{k-1}},\overline{m_2+2^{k-1}}\big)\Big\}.
\]
We have $P=f^{-1}(Z\cap F)$ and therefore the cardinality of $P$ is twice that of $Z\cap F$. If
$(\overline{a},\overline{b})\in Z\backslash F$, then $a$ or $b$ is odd. In that case $\overline{a}$ or
$\overline{b}$ is invertible so that the other one is $\overline{0}$. There are thus $2\cdot 2^{k-1}$
elements in $Z\backslash F$, therefore $k(2^k-2^{k-1})=k2^{k-1}$ elements in $Z\cap F$. Therefore $P$ has
$k2^k$ elements.
\end{proof}
According to the previous lemma we have that
\begin{align*}
   \sum_{\substack{m_1,m_2\colon\\\max(1,q_i-\sqrt{x})\leq m_i < \sqrt{x}}}
   \chi_D(m_1^2,m_2^2)
   &\leq c(D)D \prod_{i=1,2}\Big(\frac{\min(\sqrt{x}, 2\sqrt{x}-q_i)}{D} + 1\Big)\\
   &\leq c(D)D \prod_{i=1,2}\big(\min(L/2,L-q'_i) + 1\big),
\end{align*}
and from~\eqref{eq:5A}, with the suppression of the assumption $\gcd(q'_1,q'_2)=1$, we get
\begin{align}
U
%
%
&\leq
   \sum_{D\leq 2\sqrt{x}} c(D)D^2\sum_{\substack{q'_1,q'_2\colon\\q'_i < L}}
   \prod_{i=1,2}\big(\min(L/2,L-q'_i) + 1\big)
   \min\Big(\frac{L-q'_1}{q'_2},\frac{L-q'_2}{q'_1}\Big).               \label{eq:6A}
\end{align}
The function $f(q'_1,q'_2):=\prod_{i=1,2}\big(\min(L/2,L-q'_i) + 1\big) \min\big(\frac{L-q'_1}{q'_2},
\frac{L-q'_2}{q'_1}\big)$ decreases in each argument when $q'_1,q'_2 \in [1,L]$, so the corresponding
integral extended to the region $[0,L]\times[0,L]$ provides an upper bound for the sum. Moreover, we
further introduce the new variables $a,b$ such that $q'_1 =: aL = 2\sqrt{x}a/D$, $q'_2 =: bL =
2\sqrt{x}b/D$, yielding
\begin{align*}
U
&\leq
   \sum_{D\leq 2\sqrt{x}} c(D)D^2 \Big[\Big(\frac{2\sqrt{x}}{D}\Big)^4 A + \Big(\frac{2\sqrt{x}}{D}\Big)^3 B + \Big(\frac{2\sqrt{x}}{D}\Big)^2 C\Big],
\end{align*}
where
\begin{align*}
A &:=    \int_{a,b\in(0,1]}      \min\Big(\frac{1}{2},1-a\Big)\min\Big(\frac{1}{2},1-b\Big)
                                 \min\Big(\frac{1-a}{b},\frac{1-b}{a}\Big) \dd a \dd b = \frac{7}{6} \log 2 - \frac{37}{72},\\
B &:=    \int_{a,b\in(0,1]} \Big(\min\Big(\frac{1}{2},1-a\Big)+\min\Big(\frac{1}{2},1-b\Big)\Big)
                                 \min\Big(\frac{1-a}{b},\frac{1-b}{a}\Big) \dd a \dd b = \frac{19}{6}\log 2 - \frac{11}{12},\\
C &:=    \int_{a,b\in(0,1]}      \min\Big(\frac{1-a}{b},\frac{1-b}{a}\Big) \dd a \dd b = 2\log 2.
\end{align*}
%
Hence the bound says
\begin{align*}
U
&\leq
   16A x^2\sum_{D=1}^{+\infty} \frac{c(D)}{D^2} + 8B x^{3/2}\sum_{D\leq 2\sqrt{x}} \frac{c(D)}{D} + 4C x\sum_{D\leq 2\sqrt{x}} c(D),
\intertext{that we estimate with}
&\leq
   16A x^2\sum_{D=1}^{+\infty} \frac{c(D)}{D^2} + 8(B+C) x^{3/2}\sum_{D\leq 2\sqrt{x}} \frac{c(D)}{D},
\end{align*}
because $D\leq 2\sqrt{x}$.\\
We can evaluate explicitly the series, since $F(s):=\sum_{n=1}^{+\infty} c(n)n^{-s} = \frac{4^s-2^s+1}{4^s-2^{s-1}}
\frac{\zeta(s)^2}{\zeta(s+1)}$ for $\Ree(s)>1$. Thus,
\[
\sum_{D=1}^{+\infty} \frac{c(D)}{D^2}
 = F(2)
 = \frac{13}{14}\frac{\zeta(2)^2}{\zeta(3)}
 = 2.090\ldots
\]
To bound the sum we introduce both the Dirichlet series $H(s)=:\sum_{n=1}^{+\infty} h(n)n^{-s}$ such that
$F(s) =: H(s)\zeta(s)^2$, so that
\[
H(s)= \Big(1-\frac{1/2}{2^s} + \frac{3/2^2}{2^{2s}} + \frac{3/2^3}{2^{3s}} + \cdots \Big)\prod_{p\odd}\Big(1 - \frac{1/p}{p^s}\Big),
\]
and the series $\widetilde{H}(s):=\sum_{n=1}^{+\infty} |h(n)|n^{-s}$, so that
\[
\widetilde{H}(s)
= \Big(1+\frac{1/2}{2^s} + \frac{3/2^2}{2^{2s}} + \frac{3/2^3}{2^{3s}} + \cdots \Big)\prod_{p\odd}\Big(1 + \frac{1/p}{p^s}\Big)
= \frac{4^{s+1}+2}{4^{s+1}-1}\frac{\zeta(s+1)}{\zeta(2s+2)}.
\]
Thus,
\[
\sum_{D\leq 2\sqrt{x}} \frac{c(D)}{D}
= \sum_{u\leq 2\sqrt{x}} \frac{h(u)}{u} \sum_{v\leq 2\sqrt{x}/u}\frac{d(v)}{v},
\]
where $d(v)$ counts the divisors of $v$. Since $\sum_{v\leq w}\frac{d(v)}{v} \leq
\frac{1}{2}\log^2(e^2w)$ for every $w$ (by partial summation from $\sum_{v\leq w}d(v)\leq w\log w + w$,
which is weaker than what is known about the mean value for the divisor function, but which is sufficient
for our purposes),
%
we get
\begin{align*}
\sum_{D\leq 2\sqrt{x}} \frac{c(D)}{D}
&\leq \sum_{u\leq 2\sqrt{x}} \frac{|h(u)|}{2u} \log^2(2e^2\sqrt{x})
 \leq \frac{\widetilde{H}(1)}{2} \log^2(2e^2\sqrt{x})                    \\
&=    \frac{3}{5}\frac{\zeta(2)}{\zeta(4)} \log^2(2e^2\sqrt{x})
 \leq 0.228\log^2(4e^4x).
\end{align*}
Thus,
\begin{equation}\label{eq:7A}
U
\leq
   9.9\,x^2 + 4.87\,x^{3/2}\log^2(4e^4x).
\end{equation}
\noindent %
Lastly we estimate the contribution of the $V$ term. We bound it trivially, i.e. substituting
$\chi_D(m_1^2,m_2^2)$ with $1$. However, to improve the conclusion, we retain the remark that when $m_1$
and $m_2$ have different parity, then $q_1$ and $q_2$ cannot be both even. In this way we get that
\begin{align*}
V
&= \sum_{\substack{m_1,m_2\colon\\m_i<\sqrt{x}}}
   \sum_{\substack{q_1,q_2\colon\\q_i\leq \sqrt{x}+m_i}}\chi_D(m_1^2,m_2^2)
 \leq
   \sum_{\substack{m_1,m_2\colon\\m_i\leq\sqrt{x}}}
   \sum_{\substack{q_1,q_2\colon\\q_i\leq \sqrt{x}+m_i}}   1
   -
   \sum_{\substack{m_1,m_2\colon\\m_1\neq m_2 \pmod{2}\\ m_i\leq\sqrt{x}}}
   \sum_{\substack{q_1,q_2\colon\\q_i \even\\q_i\leq \sqrt{x}+m_i}}   1                   \\
&\leq \Big[\sum_{m\leq\sqrt{x}}(\sqrt{x}+m)\Big]^2
   -2\sum_{\substack{m_1 \leq \sqrt{x}\\m_1\vphantom{\odd}\even}}\sum_{\substack{m_2\leq \sqrt{x}\\m_2\odd}}
    \frac{(\sqrt{x}+m_1-2)}{2}\frac{(\sqrt{x}+m_2-2)}{2}                                  \\
&\leq \Big[\sum_{m\leq\sqrt{x}}(\sqrt{x}+m)\Big]^2
   -\frac{1}{2}
    \sum_{\substack{u \leq \frac{\sqrt{x}}{2}}}  (2u+\sqrt{x}-2)
    \sum_{\substack{v \leq \frac{\sqrt{x}+1}{2}}}(2v+\sqrt{x}-3).
\end{align*}
%
%
%
After some computations we get that
\begin{equation}\label{eq:8A}
V \leq \frac{63}{32}\,x^2 + \frac{51}{16}\,x^{3/2} - \frac{111}{32}\,x + \frac{57}{16}\,\sqrt{x} - \frac{5}{4}
  \leq \frac{63}{32}\,x^2 + \frac{51}{16}\,x^{3/2}.
\end{equation}
%
Adding~\eqref{eq:7A} and~\eqref{eq:8A} we get the claim since $\frac{51}{16}\,x^{3/2} \leq 0.13\,x^{3/2}
\log^2(4e^4x)$ for every $x\geq 1$.
%
\medskip

Retaining the coprimality condition in~\eqref{eq:6A} the main term in~\eqref{eq:7A} appears divided by
$\zeta(2)$, at the cost of error terms of size $O(x^{3/2}\log^4x)$. Also~\eqref{eq:8A} can be improved a
bit using the result in Lemma~\ref{lem:2A} and keeping the resulting coprimality condition. The combined
effect of these two improvements gives the bound $8 x^2 + O(x^{3/2}\log^4x)$ mentioned after
Theorem~\ref{th:1}.


\end{document}